\newcommand{\alglinelabel}{%
  \addtocounter{ALC@line}{-1}
  \refstepcounter{ALC@line}
  \label
}
\newtheorem{assumption}{Assumption}
\title[Revisiting the Geometrically Decaying Step Size]{Revisiting the Geometrically Decaying Step Size: \\Linear Convergence for Smooth or Non-Smooth Functions}
\begin{document}

\maketitle

\begin{abstract}%
We revisit the geometrically decaying step size given a positive inverse condition number, under which a locally Lipschitz function shows linear convergence. The positivity does not require the function to satisfy convexity, weak convexity, quasar convexity, or sharpness, but instead amounts to a property strictly weaker than the assumptions used in existing works (\textit{e.g.}, weak convexity + sharpness). We propose a clean and simple subgradient descent algorithm that requires minimal knowledge of problem constants, applicable to either smooth or non-smooth functions.
\end{abstract}

\section{Introduction}
The goal of optimization is to find a true minimizer $x^*$ in the set of minimizers $\mathcal{X}^*\subseteq \mathbb{R}^n$ of a function $f(x)$, whether smooth or non-smooth.
Goffin \cite{goffin1977on} and Shor \cite{shor1985minimization} independently studied the geometrically decaying step sizes in the subgradient descent algorithm, presenting various scenarios of the decaying rate, showing that linear convergence is achieved under convexity and sharpness-type assumptions. 
Polyak \cite{polyak1969min} also proved linear convergence under similar conditions; however, the Polyak step size requires the information of the minimum function value $f(x^*)$. 
Recent studies have proposed alternative methods to achieve linear convergence. 
For example, \cite{yang2018RSG} suggested averaging past consecutive iterates to obtain a robust estimate, while \cite{johnstone2019faster} proposed the descending-stairs step size scheme, which reduces the step size only occasionally. However, these approaches generally require complicated constants or involve redundant subroutines.

In this work, we revisit the classical geometrically decaying step size and propose a simple subgradient descent algorithm that achieves linear convergence for \textit{a locally Lipschitz—smooth or non-smooth—function} $f(x)$ under a general condition and requires minimal knowledge of constants. A similar line of work \cite{davis2018subgradient} established linear convergence under weak convexity and sharpness; however, these conditions are somewhat restrictive. In contrast, we  show that the ``positive inverse condition number'' stated in the next section turns out to be a property strictly weaker than the aforementioned weak convexity and sharpness. 
Meanwhile, the work \cite{hinder2020nearoptimal} examines quasar-convex functions—a generalization of star-convexity \cite{nesterov2006cubic} under which every line segment from a minimizer $x^*$ to any other point preserves convexity. We also establish that
the positive inverse condition number holds for all quasar-convex functions and extends to functions beyond this class.

Our focus is on this generalized \textit{positive inverse condition number} assumption, which holds not only for arbitrary norm functions but also for a class of nonconvex functions that may not satisfy sharpness, weak convexity, or quasar convexity. From a geometric perspective, a positive inverse condition number implies that the negative subgradient direction forms an acute angle with the direction toward a minimizer. This property can be interpreted in terms of cosine similarity, which is experimentally investigated in \cite{guille-escuret2024noWrongTurns}. Such a property frequently arises in applications involving  nonconvex functions without spurious local minima, including phase retrieval \cite{netrapalli2013phase, candes2015phase}, matrix recovery \cite{chen2014convergence,bi2022local}, image alignment \cite{zhang2023prise}, and dynamical systems \cite{hardt2018gradient}, even under adversarial environments \cite{kim2024prevailing}.  As one application, in dynamic programming (DP), it has been shown that if each DP subproblem has no spurious local minima, then one can obtain a global solution to the entire (one-shot) optimization problem \cite{BhandariRusso2024, kim2024landscape}, in which case the one-shot problem can be solved by repeatedly leveraging the property that will be discussed throughout the paper.

\textit{Notation: } We denote by $\|\cdot \|$ the $\ell_2$-norm of a vector, and by $\langle \cdot, \cdot \rangle$ the inner product of two vectors.  For a point $x$ and a set $X$,  $\text{dist}(x;X)$ denotes $\inf_{y\in X} \|x-y\|$. For two sets $X$ and $Y$,  $X\setminus Y$ denotes the set of elements in $X$  not in $Y$. 

\section{Assumption for Linear Convergence}

In this section, we begin by presenting the following assumption for the geometrically decaying step size scheme to achieve linear convergence. 
In this regard, we define the set of interest $S=\{x\in \mathbb{R}^n : \text{dist}(x; \mathcal{X}^*)\leq \text{dist}(x_0; \mathcal{X}^*)\}$, given an initial point $x_0\in\mathbb{R}^n$. 


\begin{assumption}[Positive Inverse Condition Number]\label{posconnum}
   Consider a locally Lipschitz function $f(x)$ over the set $S$. We define the inverse condition number $\bar{\mu}$ as
    \begin{align}\label{condnum}
            \bar{\mu} = \inf_{x^*\in \mathcal{X}^*, x\in S\setminus \mathcal{X}^*  }\inf_{u\in \partial^\circ f(x)\setminus \{0\}} \frac{\langle u, x-x^* \rangle}{\|u\|\|x-x^*\|},
    \end{align}
    where $\partial^\circ f(x)$ is a Clarke differential \cite{clarke1990optimization}, whose elements are called generalized subgradients.
    We assume that $\bm{\bar{\mu} > 0}$.
\end{assumption}
\begin{remark}
    Note that $\partial^\circ f(x)$ is nonempty, closed, and convex for all $x\in S$, since $f(x)$ is assumed to be locally Lipschitz. We also note that $\bar{\mu} \leq 1$ holds due to the Cauchy-Schwarz inequality.
\end{remark}

Under Assumption \ref{posconnum}, $0\in \partial^\circ f(x)$ is an equivalent condition to optimality, and thus serves as a stopping criterion for the generalized subgradient descent algorithm. 

\begin{lemma}\label{optimal}
    Suppose that $\langle u, x-x^*\rangle > 0$ holds for all $ x^*\in \mathcal{X}^*, x\in S\setminus \mathcal{X}^*$, and any $u \in \partial^\circ f(x)$. Then, a point $s\in S$ is a minimizer of $f$ if and only if $0\in \partial^\circ f(s)$. 
\end{lemma}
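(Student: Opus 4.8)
The plan is to establish the equivalence as two separate implications: the forward direction is the classical Fermat rule for the Clarke subdifferential, and the reverse direction is a one-line contradiction that directly exploits the strict positivity hypothesis. I expect essentially all of the work to reduce to citing a known stationarity result and to observing that the hypothesis already forbids a zero generalized subgradient at any non-minimizer.

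For the forward direction, suppose $s\in S$ is a minimizer, i.e.\ $s\in\mathcal{X}^*$. Then $s$ is in particular a local minimizer of the locally Lipschitz function $f$, and I would invoke Fermat's rule for the Clarke subdifferential \cite{clarke1990optimization}: at any local minimizer of a locally Lipschitz function one has $0\in\partial^\circ f(s)$. This is the only genuine analytic input, and it is available off the shelf from the cited reference, so no new computation is required.

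For the reverse direction, suppose $0\in\partial^\circ f(s)$ with $s\in S$, and argue by contradiction. If $s$ were not a minimizer, then $s\in S\setminus\mathcal{X}^*$, so the hypothesis applies to the point $s$ and, crucially, to the particular generalized subgradient $u=0\in\partial^\circ f(s)$. Choosing any $x^*\in\mathcal{X}^*$ — which exists by the standing assumption that the problem admits a minimizer — the hypothesis would force $\langle 0,\, s-x^*\rangle>0$, i.e.\ $0>0$, a contradiction. Hence $s\in\mathcal{X}^*$, so $s$ is a minimizer.

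The only step requiring mild care, rather than a true obstacle, is the nonemptiness of $\mathcal{X}^*$: the reverse implication needs at least one $x^*$ to substitute into the inner product, and I would simply note that this is guaranteed by the setup (the goal is to locate a minimizer in the assumed-nonempty set $\mathcal{X}^*$). Conceptually, the lemma holds because the hypothesis already rules out a zero generalized subgradient at every non-minimizer of $S$, so the stationarity condition $0\in\partial^\circ f(s)$ can be met only at minimizers, while the converse is nothing more than the classical Fermat rule for locally Lipschitz functions.
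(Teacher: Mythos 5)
Your proposal is correct and follows essentially the same argument as the paper: the forward direction cites Fermat's rule for the Clarke subdifferential (Proposition 2.3.2 in \cite{clarke1990optimization}), and the reverse direction applies the hypothesis to $u=0$ at a supposed non-minimizer to obtain the contradiction $\langle 0, s-x^*\rangle>0$. Your added remark about the nonemptiness of $\mathcal{X}^*$ is a reasonable point of care, and it is indeed guaranteed by the paper's setup.
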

\begin{proof}
    If $s\in S$ is a minimizer, it is evident that $0\in \partial^\circ f(s)$ (see Proposition 2.3.2 in \cite{clarke1990optimization}). To prove the converse, suppose that $0\in \partial^\circ f(s)$ holds but $s$ is not a minimizer of $f$. Then, due to the precondition of the lemma, we arrive at $\langle 0, s -x^*\rangle >0$ for all $x^*\in \mathcal{X}^*$, which yields the contradiction. Thus, $s$ is a minimizer of $f$.
\end{proof}

\begin{remark}
 Lemma \ref{optimal} implies that if $x \in S \setminus \mathcal{X}^*$ (\textit{i.e.}, not a minimizer), then $0 \notin \partial^\circ f(x)$. Hence, in the definition of the inverse condition number in \eqref{condnum}, one can replace $u \in \partial^\circ f(x) \setminus \{0\}$ with $u \in \partial^\circ f(x)$, while keeping the quantity well-defined. 
\end{remark}

In the following two lemmas, we now provide some natural sufficient conditions for $\bar{\mu}>0$ to hold.

\begin{lemma}\label{weakly}
    Suppose $f(x)$ is locally Lipschitz over the set $S$ and $\rho$-weakly convex with respect to $x^*$; \textit{i.e.}, there exists $M>0,~ \rho \geq 0$ such that
\begin{align*}
    & \|u\|\leq M \quad \text{and} \quad f( x^* )\geq f(x) + \langle u, x^*-x\rangle - \frac{\rho}{2}\|x^* -x\|^2, \quad \forall u \in \partial^\circ f(x)
\end{align*}
   for all $x^*\in \mathcal{X}^*$ and $x\in S\setminus \mathcal{X}^*  $.  Suppose also that the function has sharpness; \textit{i.e.}, there exists  $m>0$ such that 
    $f(x)-f(x^*)\geq m \|x - x^*\|$
    for all $x^*\in \mathcal{X}^*$ and $x\in S\setminus \mathcal{X}^*  $.
    When $\text{dist}(x_0; \mathcal{X}^*)\leq \frac{m}{\rho}$, we have $\bar{\mu}\geq \frac{m}{2M}>0. $
\end{lemma}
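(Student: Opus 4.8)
The plan is to bound the cosine-type ratio in \eqref{condnum} directly, by chaining the weak-convexity inequality with sharpness. Fix an admissible triple $x^*\in\mathcal{X}^*$, $x\in S\setminus\mathcal{X}^*$, and a nonzero $u\in\partial^\circ f(x)$. Rearranging the $\rho$-weak-convexity inequality immediately isolates the inner product that appears in the numerator:
\begin{align*}
\langle u, x-x^*\rangle \ge f(x)-f(x^*)-\frac{\rho}{2}\|x-x^*\|^2.
\end{align*}

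Next I would substitute the sharpness bound $f(x)-f(x^*)\ge m\|x-x^*\|$ and factor out $\|x-x^*\|$:
\begin{align*}
\langle u, x-x^*\rangle \ge m\|x-x^*\|-\frac{\rho}{2}\|x-x^*\|^2 = \|x-x^*\|\Big(m-\frac{\rho}{2}\|x-x^*\|\Big).
\end{align*}
The crux is then to bound the factor $m-\frac{\rho}{2}\|x-x^*\|$ below by $\frac{m}{2}$, which is equivalent to the distance estimate $\|x-x^*\|\le\frac{m}{\rho}$. Taking $x^*$ to be a nearest minimizer, membership in $S$ gives $\|x-x^*\|=\text{dist}(x;\mathcal{X}^*)\le\text{dist}(x_0;\mathcal{X}^*)\le\frac{m}{\rho}$, so that $\langle u, x-x^*\rangle\ge\frac{m}{2}\|x-x^*\|$ (which, incidentally, also confirms the positivity precondition of Lemma~\ref{optimal}).

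Dividing by $\|u\|\,\|x-x^*\|>0$ and using the uniform subgradient bound $\|u\|\le M$ then gives
\begin{align*}
\frac{\langle u, x-x^*\rangle}{\|u\|\,\|x-x^*\|}\ge\frac{m}{2\|u\|}\ge\frac{m}{2M}.
\end{align*}
Since the triple was arbitrary, taking the infimum over all of them yields $\bar{\mu}\ge\frac{m}{2M}>0$, as claimed.

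The step I expect to be the main obstacle is reconciling the estimate $\|x-x^*\|\le\frac{m}{\rho}$ with the fact that the infimum in \eqref{condnum} ranges over \emph{all} $x^*\in\mathcal{X}^*$, whereas membership in $S$ only controls the distance to the \emph{nearest} minimizer. For a distant minimizer the quadratic penalty can overwhelm the linear sharpness term, so the factored lower bound becomes vacuous and a separate argument is needed to rule out a smaller ratio. I would resolve this either by arguing that the projection of $x$ onto $\mathcal{X}^*$ is the binding case in the inner product, or by invoking that the stated hypotheses hold for every $x^*$ and verifying that $\text{dist}(x_0;\mathcal{X}^*)\le\frac{m}{\rho}$ is exactly what keeps the ratio above $\frac{m}{2M}$ uniformly on $S$.
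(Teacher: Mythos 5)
Your argument is, in its core chain, the same as the paper's proof: the paper likewise combines weak convexity, sharpness, and the bound $\|x-x^*\|\le m/\rho$, only it divides by $\|u\|\|x-x^*\|$ first and then bounds the result below by $\frac{1}{M}\left(m-\frac{\rho}{2}\cdot\frac{m}{\rho}\right)$. Your ordering is in fact slightly safer: replacing $\|u\|$ by $M$ inside a fraction is only valid once the numerator is known to be nonnegative, which your version establishes explicitly ($\langle u,x-x^*\rangle\ge\frac{m}{2}\|x-x^*\|\ge 0$) before dividing, whereas the paper leaves this implicit.

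The obstacle you flag in your last paragraph is genuine, and the paper's own proof silently skips it: the chain needs $\|x-x^*\|\le m/\rho$ for the \emph{particular} $x^*$ appearing in the infimum \eqref{condnum}, while $x\in S$ only controls the distance to a nearest minimizer. However, neither of your two proposed fallback strategies is the natural way out; the clean resolution is that the lemma's hypotheses force $\mathcal{X}^*$ to be a singleton whenever the claim is non-vacuous. First, $\mathcal{X}^*$ is closed: if $\text{dist}(y;\mathcal{X}^*)=0$ then $y\in S$, and continuity of $f$ on $S$ (from local Lipschitzness) gives that $y$ is itself a minimizer. If $x_0\in\mathcal{X}^*$, then $S=\mathcal{X}^*$, the infimum in \eqref{condnum} runs over an empty index set, and there is nothing to prove. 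Otherwise $d_0:=\text{dist}(x_0;\mathcal{X}^*)>0$, so $S$ contains a ball of radius $d_0$ around every minimizer; since $x_0\notin\mathcal{X}^*$, the closed set $\mathcal{X}^*$ is a proper subset of $\mathbb{R}^n$ and hence has a boundary point $y\in\mathcal{X}^*$, and there exist $x_k\in S\setminus\mathcal{X}^*$ with $x_k\to y$. If there were a second minimizer $z^*\neq y$, sharpness with respect to $z^*$ would give $f(x_k)-f(z^*)\ge m\|x_k-z^*\|\to m\|y-z^*\|>0$, contradicting $f(x_k)\to f(y)=f(z^*)$. Hence $\mathcal{X}^*=\{x^*\}$ is a singleton, every $x\in S$ satisfies $\|x-x^*\|=\text{dist}(x;\mathcal{X}^*)\le d_0\le m/\rho$, and your main chain then goes through verbatim for every admissible pair, which closes both your proof and, incidentally, the gap in the paper's.
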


\begin{proof}
Using the definition of the inverse condition number, we have
\begin{align*}
\bar{\mu} \geq \frac{f(x)-f(x^*)-\frac{\rho}{2}\|x^*-x\|^2}{\|u\|\|x-x^*\|}\geq \frac{1}{M}\left(\frac{f(x)-f(x^*)}{\|x-x^*\|} - \frac{\rho}{2}\|x^*-x\|\right) \geq \frac{1}{M}(m-\frac{\rho}{2}\cdot \frac{m}{\rho})=\frac{m}{2M}
\end{align*}
for all $x^*\in \mathcal{X}^*, x\in S\setminus \mathcal{X}^*  $.
This completes the proof.
\end{proof}

\begin{lemma}\label{quasar}
     Suppose $f(x)$ is locally Lipschitz over the set $S$ and $\gamma$-quasar convex with respect to $x^*$; \textit{i.e.}, there exists $M>0$, $\gamma \in (0,1]$ such that 
     \begin{align*}
     & \|u\|\leq M \quad \text{and} \quad
     f(x^*) \geq f(x) + \frac{1}{\gamma} \langle u, x^*-x\rangle, \quad \forall u \in \partial^\circ f(x)
     \end{align*}
    for all $x^*\in \mathcal{X}^*$ and $x\in S\setminus \mathcal{X}^*  $. Suppose also that the function has sharpness; \textit{i.e.}, there exists  $m>0$ such that 
    $f(x)-f(x^*)\geq m \|x - x^*\|$
    for all $x^*\in \mathcal{X}^*$ and $x\in S\setminus \mathcal{X}^*  $. Then, we have
     $\bar{\mu} \geq \frac{\gamma m}{M}>0.$
\end{lemma}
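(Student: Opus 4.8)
The plan is to follow the same template as the proof of Lemma \ref{weakly}: chain the quasar-convexity inequality with the sharpness bound and then divide by the normalizing factor appearing in \eqref{condnum}. First I would start from the defining expression for $\bar{\mu}$ and fix an arbitrary triple $x^* \in \mathcal{X}^*$, $x \in S \setminus \mathcal{X}^*$, and $u \in \partial^\circ f(x) \setminus \{0\}$. It then suffices to bound the ratio $\langle u, x-x^*\rangle / (\|u\|\|x-x^*\|)$ from below by $\gamma m / M$ \emph{uniformly} in this triple, after which the infimum defining $\bar{\mu}$ inherits the same bound.

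The key step is to rearrange the quasar-convexity inequality into a lower bound on the inner product. Multiplying $f(x^*) \geq f(x) + \frac{1}{\gamma}\langle u, x^*-x\rangle$ through by $\gamma > 0$ and using $\langle u, x^*-x\rangle = -\langle u, x-x^*\rangle$ yields $\langle u, x-x^*\rangle \geq \gamma\,(f(x)-f(x^*))$. Next I would invoke sharpness, $f(x)-f(x^*) \geq m\|x-x^*\|$, to obtain $\langle u, x-x^*\rangle \geq \gamma m \|x-x^*\|$. Note this already shows the inner product is strictly positive for $x \notin \mathcal{X}^*$, consistent with the acute-angle geometry described in the introduction.

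To finish, I would divide both sides by $\|u\|\|x-x^*\|$, which is legitimate since $u \neq 0$ and $\|x-x^*\| > 0$ away from $\mathcal{X}^*$, giving a lower bound of $\gamma m / \|u\|$ for the ratio; the bound $\|u\| \leq M$ then reduces this to $\gamma m / M$. Since nothing in this chain depends on the particular $x^*$, $x$, or $u$, taking the infimum yields $\bar{\mu} \geq \gamma m / M > 0$.

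I expect no genuine obstacle here. Unlike the weakly convex case of Lemma \ref{weakly}, the quasar-convexity bound is already linear in the displacement $x-x^*$, so no quadratic error term arises that would need to be dominated, and consequently no restriction of the form $\text{dist}(x_0; \mathcal{X}^*)\leq m/\rho$ is required. The only points demanding mild care are the sign bookkeeping when rearranging the quasar inequality and confirming $u \neq 0$ so that the division is valid; the latter is guaranteed by the remark following Lemma \ref{optimal}, which ensures $0 \notin \partial^\circ f(x)$ whenever $x \in S \setminus \mathcal{X}^*$.
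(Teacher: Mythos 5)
Your proof is correct and follows essentially the same route as the paper's: rearrange the quasar-convexity inequality to get $\langle u, x-x^*\rangle \geq \gamma\,(f(x)-f(x^*))$, apply sharpness and the bound $\|u\|\leq M$, and pass to the infimum. You simply spell out the sign bookkeeping and the validity of the division, which the paper compresses into a single displayed chain of inequalities.
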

\begin{proof}
    One can observe that 
     $   \bar{\mu} \geq\ \frac{\gamma(f(x)-f(x^*))}{\|u\|\|x-x^*\|}\geq \frac{\gamma m}{M}$
 for all $x^*\in \mathcal{X}^*$ and $x\in S\setminus \mathcal{X}^*  $.
\end{proof}

Lemmas \ref{weakly} and \ref{quasar}  respectively provide sufficient conditions for Assumption \ref{posconnum} to hold, but it requires the function to  have sharpness and be weakly-convex or quasar-convex.
To illustrate that our Assumption \ref{posconnum} can be regarded as a more general notion, we present examples where it holds even though the function lacks convexity, weak convexity, quasar convexity, or sharpness. For a pictorial illustration, we present graphs of these examples in Figure \ref{fig:examples}.

\begin{example}[Assumption \ref{posconnum} does not imply convexity]\label{convexisstrong}
Consider a function $f(x) = |3x| + \sin(|x|)$, which attains its unique minimum at $0$. Although the function is nonconvex,  we have $\bar{\mu} >0$ since
    \begin{align*}
        \langle f'(x), x\rangle =
            |x|\cdot (3+\cos (x)) , \quad x\neq 0,
    \end{align*}
    which is exactly $|x| |f'(x)|$. This implies that the inverse condition number is 1.
\end{example}

\begin{figure}[t]
  \centering
  \begin{minipage}[b]{0.24\textwidth}
    \centering
    \includegraphics[width=\linewidth, height=30mm]{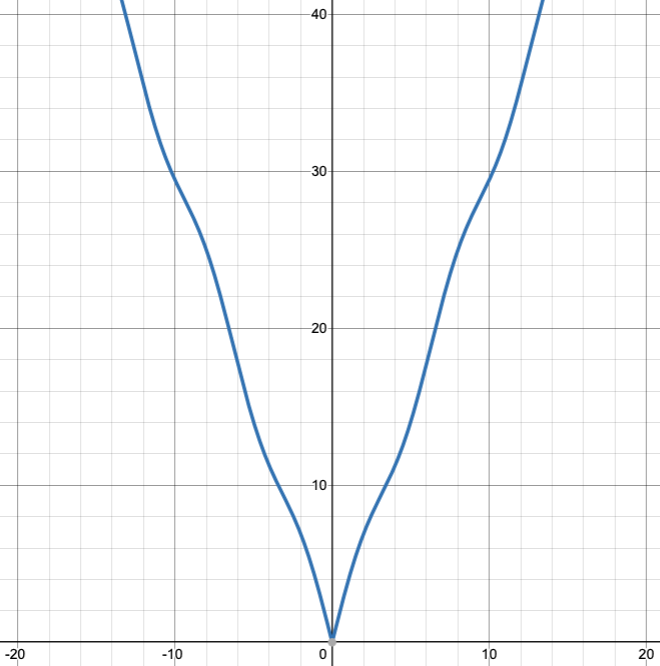}
    \text{(a) Example \ref{convexisstrong}} 
    \label{fig:a}
  \end{minipage}\hfill
  \begin{minipage}[b]{0.24\textwidth}
    \centering
    \includegraphics[width=\linewidth, height=30mm]{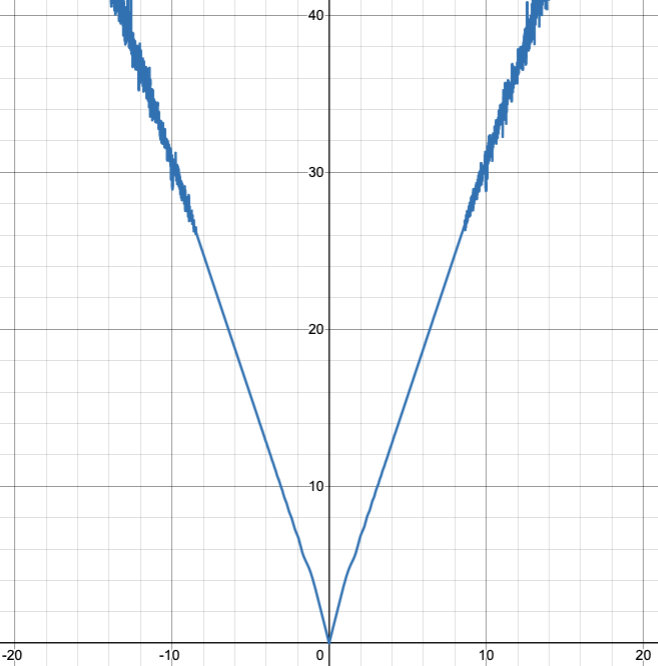}
    \text{(b) Example \ref{weakisstrong}}
    \label{fig:b}
  \end{minipage}
  \begin{minipage}[b]{0.28\textwidth}
    \centering
    \includegraphics[width=\linewidth, height=30mm]{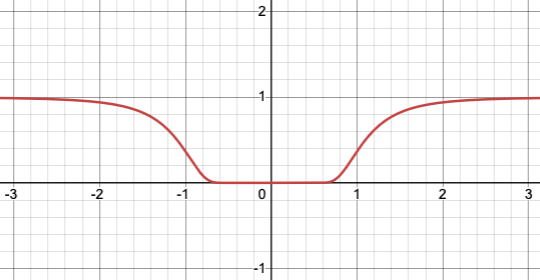}
    \text{(c) Example \ref{quasarisstrong}}
    \label{fig:c}
  \end{minipage}
  \hfill
  \begin{minipage}[b]{0.21\textwidth}
    \centering
    \includegraphics[width=\linewidth, height=30mm]{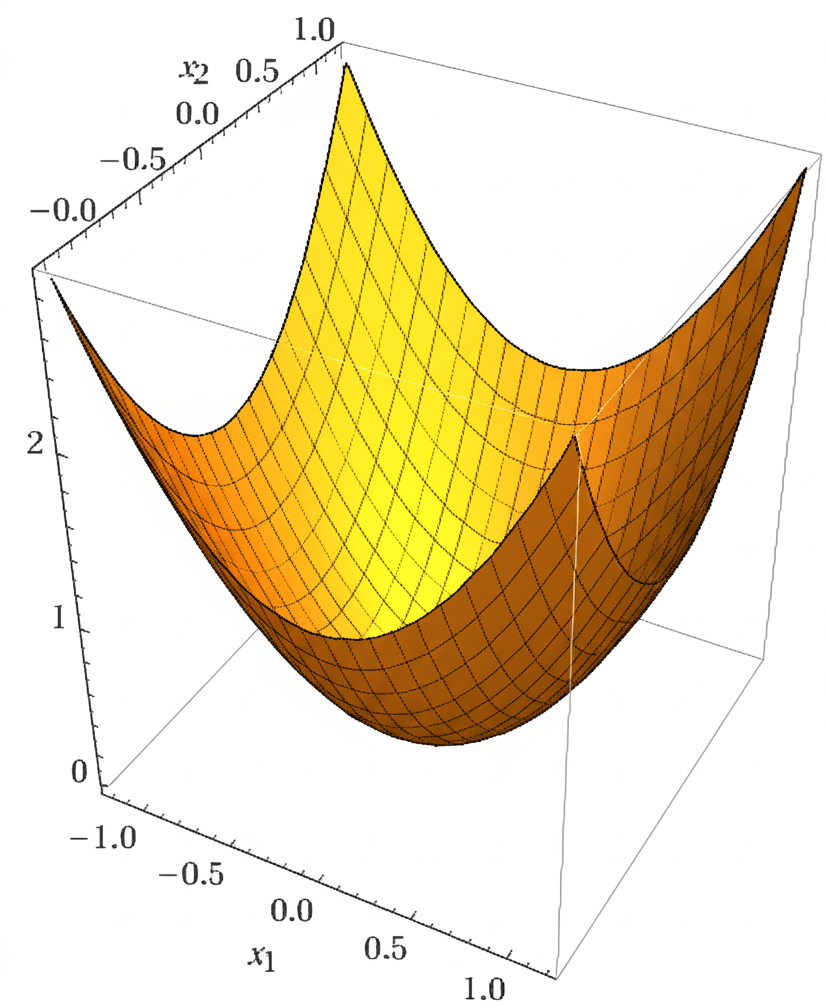}
    \text{(d) Example \ref{ex4}}
    \label{fig:d}
  \end{minipage}
  \caption{Examples of functions with positive inverse condition numbers but without (a) convexity, \\(b) weak convexity, (c) quasar convexity, or (d) sharpness}
  \label{fig:examples}
  \vspace{-2mm}
\end{figure}

\begin{example}[Assumption \ref{posconnum} does not imply weak convexity]\label{weakisstrong}
    Consider a function $f(x) = \int_0^{|x|} (3+\cos (s^3))ds$, which attains its minimum at $0$. Since the function is twice-differentiable, the weak convexity is equivalent to finding $\rho\geq 0$ such that $f''(x) \geq -\rho$ for all $x$. For $x>0$, we have
    \begin{align*}
        f''(x) = \frac{d}{dx}(3+\cos(x^3)) = -3x^2 \sin(x^3),
    \end{align*}
    which is unbounded below. However, we have $\bar{\mu} >0$ since
    \begin{align*}
         \langle f'(x) , x\rangle = 
           |x|\cdot (3+\cos(x^3)), \quad x\neq 0,     
    \end{align*}
    which is exactly $|x| |f'(x)|$. This implies that the inverse condition number is 1.
\end{example}

\begin{example}[Assumption \ref{posconnum} does not imply quasar convexity]\label{quasarisstrong}
Consider a function $f(x) = e^{-1/x^4}$ for $x\neq 0$, and $f(0)=0$, which attains its minimum at $0$. It fails to satisfy quasar convexity since there does not exist $\gamma >0$ such that 
\begin{align*}
    \gamma  \leq \frac{\langle f'(x), x\rangle}{f(x)} = \frac{\frac{4}{x^5}e^{-1/x^4} \cdot x}{e^{-1/x^4}} = \frac{4}{x^4}, \quad \forall x\neq 0,
\end{align*}
since the right term converges to $0$ as $x\to \pm\infty$. However, we have $\bar{\mu} >0$ since 
$    \langle f'(x) , x\rangle = \frac{4}{x^4}e^{-1/x^4} =|x| |f'(x)|,$
implying that the inverse condition number is 1.
\end{example}

\begin{example}[Assumption \ref{posconnum} does not imply sharpness]\label{ex4}
    Consider a function $f(x_1,x_2) = x_1^2 + x_2^2$, which attains its unique minimum at $0$. This function does not have sharpness in the sense that for any $m>0$, one can always find  $(x_1, x_2)$ such that 
    \[
    f(x_1,x_2) = x_1^2 + x_2^2 < m \sqrt{x_1^2 + x_2^2},
    \]
    especially when $0<x_1^2+x_2^2 < m^2$. However, we have 
$ \langle \nabla f(x_1,x_2), (x_1,x_2)\rangle = 2\sqrt{x_1^2+x_2^2} \cdot \sqrt{x_1^2+x_2^2}$,
    which implies that the inverse condition number is 1. 
\end{example}

For non-smooth functions, existing works \cite{davis2018subgradient, hinder2020nearoptimal} relied on weak convexity or quasar convexity, together with sharpness. On the other hand, for smooth functions, the quadratic growth condition $f(x)-f(x^*)\geq m \|x- x^*\|^2$ derived from strong convexity is a standard assumption to ensure linear convergence \cite{boyd2004convex, nesterov2004intro}. However,  $x_1^2+x_2^2$ has quadratic growth with $m=1$, yet not sharpness (see Example \ref{ex4}), indicating that sharpness is an overly restrictive requirement applicable only to functions that are non-smooth at the minimum. In the next section, we will show that a \textit{positive inverse condition number} suffices to guarantee linear convergence for either smooth or non-smooth functions.

\section{Geometrically Decaying Step Size Design}

In this section, we provide the generalized subgradient descent algorithm and provide our convergence analysis under the \textit{positive inverse condition number}.
We note that \cite{davis2018subgradient} and \cite{hinder2020nearoptimal} introduced a linear convergent algorithm only under the assumptions of Lemma \ref{weakly} and Lemma \ref{quasar}, respectively, whereas our analysis shows that linear convergence still holds under a strictly weaker condition.

\begin{algorithm}[!t]
  \caption{Generalized Subgradient Descent Algorithm}
  
  \begin{flushleft}
        \textbf{Input:} Inverse Condition number $\bar{\mu}$. Initial point $x_0$. 
    
  \end{flushleft}
  \begin{multicols}{2}
  \begin{algorithmic}[1]
  \STATE  Let $0<r \leq \min\{\bar{\mu}, \frac{1}{\sqrt{2}}\}$. 
    \FOR{time $t=0,1,\dots$}
\IF{$0\in \partial^\circ f(x_t)$}
\STATE Break. $x_t$ is a minimizer.

    \ELSE
        \STATE Pick $g_t\in \partial^\circ f(x_t)$.
        
        \STATE Set the step size $\eta_t = \frac{  r\left(1-r^2\right)^{t/2} \|x_0-x^*\|}{\|g_t\|}.$\alglinelabel{stepsize}
        \STATE Let $x_{t+1} =x_t - \eta_t g_t.$
        \ENDIF
    \ENDFOR 
  \end{algorithmic}
  \end{multicols}
  \label{Algorithm 1}
\end{algorithm}

\begin{theorem}\label{theorem1}
  Consider a minimizer $x^* \in \mathcal{X}^*$. Under Assumption \ref{posconnum}, Algorithm \ref{Algorithm 1} achieves
  \begin{align}
      \|x_t-x^*\|\leq \left(1-r^2\right)^{t/2} \|x_0-x^*\|. 
  \end{align}
\end{theorem}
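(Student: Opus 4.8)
The plan is to argue by induction on $t$ that $\|x_t - x^*\| \leq (1-r^2)^{t/2}\|x_0-x^*\|$, the base case $t=0$ being an equality. For the inductive step, assuming the bound at time $t$, I would expand the square of the update $x_{t+1}=x_t-\eta_t g_t$:
\[
\|x_{t+1}-x^*\|^2 = \|x_t - x^*\|^2 - 2\eta_t \langle g_t, x_t - x^*\rangle + \eta_t^2 \|g_t\|^2 .
\]
Before invoking Assumption~\ref{posconnum}, I must check $x_t\in S$ so that the condition-number inequality applies. Since we are in the \textbf{else} branch, $0\notin\partial^\circ f(x_t)$, so by Lemma~\ref{optimal} the point $x_t$ is not a minimizer and $g_t\neq 0$, making $\eta_t$ well-defined; moreover the inductive bound gives $\text{dist}(x_t;\mathcal{X}^*)\leq \|x_t-x^*\|\leq \|x_0-x^*\|$, and taking $x^*$ to be a nearest minimizer of $x_0$ places $x_t$ in $S\setminus\mathcal{X}^*$. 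Assumption~\ref{posconnum} then yields $\langle g_t, x_t-x^*\rangle \geq \bar\mu\,\|g_t\|\,\|x_t-x^*\|$.

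Writing $c_t := (1-r^2)^{t/2}\|x_0-x^*\|$ so that $\eta_t = r c_t/\|g_t\|$, the quadratic term collapses to $\eta_t^2\|g_t\|^2 = r^2 c_t^2$, while the cross term is controlled using the condition-number bound together with $r\leq\bar\mu$ (the negative sign means I need the lower bound on the inner product):
\[
-2\eta_t\langle g_t, x_t-x^*\rangle \leq -2 r c_t\,\bar\mu\,\|x_t-x^*\| \leq -2 r^2 c_t\,\|x_t-x^*\| .
\]
Combining these gives $\|x_{t+1}-x^*\|^2 \leq \|x_t-x^*\|^2 - 2r^2 c_t\|x_t-x^*\| + r^2 c_t^2$. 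Setting $d:=\|x_t-x^*\|$, which satisfies $0\leq d\leq c_t$ by the inductive hypothesis, the goal reduces to the scalar inequality $d^2 - 2r^2 c_t d + r^2 c_t^2 \leq (1-r^2)c_t^2 = c_{t+1}^2$.

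The crux is this last scalar inequality. Rearranging it as $d^2 - 2r^2 c_t d + (2r^2-1)c_t^2 \leq 0$, the left side is an upward parabola in $d$ with roots exactly $c_t$ and $(2r^2-1)c_t$. The second step-size constraint $r\leq 1/\sqrt{2}$ forces $2r^2-1\leq 0$, so the smaller root is nonpositive and the parabola stays $\leq 0$ on the entire interval $[0,c_t]$ containing $d$; hence $\|x_{t+1}-x^*\|\leq c_{t+1}$, closing the induction. I expect the main obstacle to be precisely this reduction: confirming that the two constraints play distinct and exact roles — $r\leq\bar\mu$ to dominate the cross term and $r\leq 1/\sqrt{2}$ to keep the lower root of the parabola outside the feasible range of $d$ — and tracking the direction of each inequality carefully, since the cross term enters with a negative sign.
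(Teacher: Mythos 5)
Your proof is correct and follows essentially the same route as the paper: induction on $t$, the identical expansion of $\|x_{t+1}-x^*\|^2$ with the condition number bounding the cross term and the step-size definition collapsing the quadratic term, and the same key scalar inequality---your root analysis of the parabola in $d$ is, after the substitution $\alpha = c_t/d \geq 1$, exactly the paper's factorization $(\alpha-1)\left((1-2r^2)\alpha+1\right)\geq 0$. If anything, you are more careful than the paper on one point: the paper never verifies that $x_t$ stays in $S$ before invoking Assumption~\ref{posconnum}, whereas your choice of $x^*$ as a nearest minimizer of $x_0$ makes that membership (and hence the applicability of the condition-number inequality at every iterate) explicit.
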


\begin{proof}
    We prove this by induction. The base case $t=0$ is trivial. For the induction step, suppose that 
    \[
    \|x_s-x^*\|\leq \left(1-r^2\right)^{s/2} \|x_0-x^*\|
    \]
    holds. Let $\alpha$ be the constant that satisfies 
    \begin{align}\label{alphalinear}
         \alpha \|x_s-x^*\| = \left(1-r^2\right)^{s/2} \|x_0-x^*\|.
    \end{align}
    By the induction hypothesis, we clearly have $\alpha\geq 1$. Then, from the definition of the step size, we arrive at 
    \begin{align}\label{alphamM}
        \eta_s \|g_s\| =r \alpha  \|x_s-x^*\|. 
    \end{align}
    
    Then, we have 
    \begin{align*}
        \|x_{s+1} -  x^*\|^2 & = \|x_s- \eta_s g_s-x^*\|^2  = \|x_s-x^*\|^2 -2\eta_s \langle g_s , x_s-x^*\rangle + \eta_s^2 \|g_s\|^2\\&\underset{\text{(a)}}{\leq} \|x_s-x^*\|^2 -2\eta_s  \|g_s\|\|x_s-x^*\| \cdot r + \eta_s^2 \|g_s\|^2 \\  &\underset{\text{(b)}}{=} \|x_s-x^*\|^2 - 2r\alpha \|x_s-x^*\|  \cdot  \|x_s-x^*\| r + (r \alpha  \|x_s-x^*\|)^2 \\&= \left(1-2r^2 \alpha +r^2 \alpha^2 \right) \|x_s-x^*\|^2 \\ &\underset{\text{(c)}}{\leq} (  1-r^2 ) \alpha^2 \|x_s-x^*\|^2 \\&\underset{\text{(d)}}{=} (  1-r^2 )^{s+1} \|x_0-x^*\|^2  
    \end{align*}
    where (a) is due to  the definition of the inverse condition number and $ \bar{\mu}\geq r$, (b) is due to \eqref{alphamM}, and (d) is from \eqref{alphalinear}. The inequality (c) is derived from the fact that $r\leq \frac{1}{\sqrt{2}}$ and $\alpha \geq 1$ yields
    \begin{align*}
        &(  1-2r^2 )\alpha^2  + 2r^2\alpha  - 1 = (\alpha-1) \left((  1-2r^2 )\alpha  + 1\right)\geq 0.
    \end{align*}
   Thus, we achieve
   $ \|x_{s+1} - x^*\|\leq (  1-r^2 )^{\frac{s+1}{2}} \|x_0-x^*\|,$
    which completes the proof.
\end{proof}

\begin{algorithm}[!t]
  \caption{Generalized Subgradient Descent Algorithm without the exact estimation of $\|x_0-x^*\|$}
  
  \begin{flushleft}
        \textbf{Input:} Inverse Condition number $\bar{\mu}$. Initial point $x_0$. A constant $R$ that satisfies 
        \begin{align}\label{Rrange}
        \beta \|x_0-x^*\| \leq R \leq (1-\beta) \|x_0-x^*\|
        \end{align}
        for a pre-determined $0<\beta \leq 0.5$. 
    
  \end{flushleft}
\begin{multicols}{2}
     \begin{algorithmic}[1]
  \STATE  Let $0<r \leq \bar{\mu}$. 
    
    \FOR{time $t=0,1,\dots$}
    \IF{$0\in \partial^\circ f(x_t)$}
\STATE Break. $x_t$ is a minimizer.

    \ELSE
        \STATE Pick $g_t\in \partial^\circ f(x_t)$.
        
        \STATE Set the step size $\eta_t = \frac{  r\left(1+(  \beta^2-2\beta)r^2\right)^{t/2} R}{\|g_t\|}$.\alglinelabel{stepsize2}
        \STATE Let $x_{t+1} =x_t - \eta_t g_t.$
        \ENDIF
    \ENDFOR 
  \end{algorithmic}
\end{multicols}
  \label{Algorithm 2}
\end{algorithm}

  In Algorithm \ref{Algorithm 1},  one can choose $r$ from 
   the range $(0, \min\{\bar{\mu}, \frac{1}{\sqrt{2}}\}]$. A natural choice to maximize the linear convergence rate $\sqrt{1-r^2}$ would be to replace $\bar{\mu}$ with $\frac{m}{2M}$ or $\frac{\gamma m}{M}$, given that we can estimate $m$ and $M$ (see Lemmas \ref{weakly} and \ref{quasar}).
   Then, the only term that we are unaware of in the step size $\eta_t$ is  $\|x_0-x^*\|$; \textit{i.e.} the distance from the initial point to any true minimizer $x^*\in \mathcal{X}^*$. In the next theorem, we propose that the unknown $\|x_0-x^*\|$ need not be exactly estimated, but can instead be replaced by a sufficiently small constant and still achieve linear convergence.

\begin{theorem}\label{theorem2}
    Consider a minimizer $x^* \in \mathcal{X}^*$.  Under Assumption \ref{posconnum}, Algorithm \ref{Algorithm 2} achieves 
    \begin{align}
          \|x_t-x^*\|\leq \left(1+(\beta^2-2\beta)r^2\right)^{t/2} \|x_0-x^*\|.
    \end{align}
\end{theorem}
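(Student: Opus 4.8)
The plan is to mirror the induction in the proof of Theorem~\ref{theorem1}, but to parametrize the one-step bound by the distance $d:=\|x_s-x^*\|$ itself rather than by the auxiliary ratio $\alpha$. Write $q := 1+(\beta^2-2\beta)r^2$ and $D:=\|x_0-x^*\|$; since $0<\beta\le 0.5$ forces $\beta^2-2\beta<0$ and $r\le\bar\mu\le 1$, one checks $q\in(0,1)$, so this is a genuine contraction factor. The base case $t=0$ is immediate. For the induction step I assume $\|x_s-x^*\|\le q^{s/2}D$ and, since we are in the else-branch with $x_s\in S\setminus\mathcal{X}^*$, expand
\begin{align*}
\|x_{s+1}-x^*\|^2 = \|x_s-x^*\|^2 - 2\eta_s\langle g_s, x_s-x^*\rangle + \eta_s^2\|g_s\|^2.
\end{align*}
Using the condition-number bound $\langle g_s, x_s-x^*\rangle \ge \bar\mu\|g_s\|\|x_s-x^*\| \ge r\|g_s\|\|x_s-x^*\|$ together with the identity $\eta_s\|g_s\| = r q^{s/2}R$ read off from the step-size rule of Algorithm~\ref{Algorithm 2}, this collapses to
\begin{align*}
\|x_{s+1}-x^*\|^2 \le \|x_s-x^*\|^2 - 2r^2 q^{s/2}R\,\|x_s-x^*\| + r^2 q^s R^2.
\end{align*}

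The key observation is that the right-hand side is a convex (upward) quadratic $h(d):=d^2 - 2r^2 q^{s/2}R\,d + r^2 q^s R^2$ in $d=\|x_s-x^*\|$, and the induction hypothesis confines $d$ to $(0,\,q^{s/2}D]$. Because a convex function on an interval attains its maximum at an endpoint, it suffices to verify the target $h(d)\le q^{s+1}D^2$ at $d=0$ and at $d=q^{s/2}D$; the claim $\|x_{s+1}-x^*\|\le q^{(s+1)/2}D$ then follows for the actual value of $d$.

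For the upper endpoint, $h(q^{s/2}D)=q^s\big(D^2 - 2r^2 RD + r^2 R^2\big)$, so after dividing by $q^s r^2$ the bound reduces to $\lambda^2-2\lambda \le \beta^2-2\beta$ with $\lambda:=R/D$; as $\lambda\mapsto\lambda^2-2\lambda$ is decreasing on $(-\infty,1]$ and the window $R\ge\beta D$ gives $\lambda\ge\beta$ (with $\lambda\le 1-\beta<1$), this holds. For the lower endpoint, $h(0)=r^2 q^s R^2$, and $h(0)\le q^{s+1}D^2$ reduces, via $R\le(1-\beta)D$, to $r^2\big[(1-\beta)^2+\beta(2-\beta)\big]\le 1$; the bracket collapses to the identity $(1-\beta)^2+\beta(2-\beta)=1$, leaving exactly $r^2\le 1$, which is true.

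The main obstacle — and the reason the $\alpha$-parametrization of Theorem~\ref{theorem1} cannot be reused verbatim — is that the effective ratio $q^{s/2}R/\|x_s-x^*\|$ blows up as $\|x_s-x^*\|\to 0$, i.e. the iterate may overshoot a nearby minimizer, so no single $\alpha$-dependent contraction estimate stays bounded. Parametrizing by $d$ and invoking convexity sidesteps this, since the dangerous small-distance regime is controlled by the finite value $h(0)=r^2q^sR^2$. The crux is then the two endpoint verifications, where the two-sided guess window $\beta D\le R\le(1-\beta)D$ turns out to be precisely what is needed: the lower bound $R\ge\beta D$ controls the far endpoint and the upper bound $R\le(1-\beta)D$ controls the near endpoint.
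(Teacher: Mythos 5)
Your proof is correct, and it rests on the same skeleton as the paper's: induction on $t$, the same expansion of $\|x_{s+1}-x^*\|^2$, the same application of Assumption \ref{posconnum} with $r\le\bar\mu$, and the same step-size identity $\eta_s\|g_s\|=r\bigl(1+(\beta^2-2\beta)r^2\bigr)^{s/2}R$. The genuine difference is how the final scalar inequality is closed. The paper changes variables to $\alpha:=\bigl(1+(\beta^2-2\beta)r^2\bigr)^{s/2}\|x_0-x^*\|/\|x_s-x^*\|\ge 1$ (and uses the symbol $q$ for your $\lambda=R/\|x_0-x^*\|$, not for the contraction factor, so watch the notation clash), reducing the induction step to $\bigl[1+(\beta^2-2\beta-\lambda^2)r^2\bigr]\alpha^2+2r^2\lambda\alpha-1\ge 0$ for all $\alpha\ge 1$, which it verifies by an explicit factorization combined with the two inequalities in \eqref{betaq}. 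You instead keep $d=\|x_s-x^*\|$ as the variable, observe that the one-step bound is a convex quadratic in $d$, and check the two endpoints of the interval allowed by the induction hypothesis. Under the substitution $d\mapsto\alpha$, your two checks are exactly the paper's two ingredients: your far endpoint $d=\bigl(1+(\beta^2-2\beta)r^2\bigr)^{s/2}\|x_0-x^*\|$ is the case $\alpha=1$, equivalent to the second inequality of \eqref{betaq} (this is where $R\ge\beta\|x_0-x^*\|$ enters), while your $d=0$ check is nonnegativity of the leading coefficient, i.e.\ the $\alpha\to\infty$ regime, equivalent to the first inequality of \eqref{betaq} together with $r\le 1$ (this is where $R\le(1-\beta)\|x_0-x^*\|$ enters). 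So both proofs consume the identical pair of consequences of \eqref{Rrange}; what yours buys is transparency: convexity-plus-endpoints replaces the ad hoc factorization and isolates cleanly which side of the window controls which regime (overshoot near the minimizer versus the far endpoint). One correction to your closing remark: the $\alpha$-parametrization does not actually break down — the paper's own proof of Theorem \ref{theorem2} reuses it, absorbing the overshoot regime into a modified factorization — it is only the specific factorization from Theorem \ref{theorem1} that cannot be copied verbatim.
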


\begin{proof}
    We use a similar induction technique. The proof details can be found in Appendix \ref{alg2proof}.
\end{proof}

\begin{remark}
    Since $0<\beta \leq 0.5$ in Algorithm \ref{Algorithm 2}, we have $(\beta^2-2\beta)r^2 < 0$, which again ensures linear convergence of the algorithm. The additional constants required to determine the step size are $\beta$ and $R$. If $\beta$ is chosen sufficiently small,  one can increase confidence to obtain a correspondingly small value of $R$ that satisfies \eqref{Rrange}. However, one should account for a natural trade-off: setting $\beta$ too small may lead to a worse linear convergence ratio. 
\end{remark}

\section{Conclusion}
In this work, we  propose a geometically decaying step size scheme that achieves linear convergence to a minimizer under the assumption of a positive inverse condition number. We identify that existing assumptions—such as weak convexity or quasar-convexity, combined with sharpness—are common in applications yet impose strictly stronger requirements than a positive inverse condition number.
We first develop a generalized subgradient descent algorithm that requires knowledge of the distance from the initial point to a minimizer, and subsequently show that it can be replaced by a user-defined constant, potentially altering the convergence rate while preserving linear convergence. Our results provide a foundation for a more general assumption and convergence analysis to incorporate the possibility of extending the framework to a broader class of functions in a wider range of optimization areas, including \textit{stochastic, robust, nonconvex, and distributed} optimization.

\bibliography{bibref}

\clearpage
\appendix

\section{Proof for Theorem \ref{theorem2}}\label{alg2proof}
 We leverage a similar induction technique from Theorem \ref{theorem1}. The base case $t=0$ is trivial. For the induction step, suppose that 
     \[
    \|x_s-x^*\|\leq \left(1+(\beta^2-2\beta)r^2\right)^{s/2} \|x_0-x^*\|
    \]
    holds. Let $\alpha$ be a constant satisfying
    \begin{align}\label{alpha2}
    \alpha \|x_s-x^*\|= \left(1+(\beta^2-2\beta)r^2\right)^{s/2} \|x_0-x^*\|,
    \end{align}
where $\alpha\geq 1$ naturally holds. Let $q := \frac{R}{\|x_0-x^*\|}$.  Then, we have 
\begin{align}\label{ralpha2}
    \eta_s \|g_s\| = r\alpha \|x_s-x^*\| \frac{R}{\|x_0-x^*\|}=rq\alpha \|x_s-x^*\| .
\end{align}
Similar to the proof in Theorem \ref{theorem1}, we now arrive at 
 \begin{align*}
        \|x_{s+1} - x^*\|^2 & \leq \|x_s-x^*\|^2 -2\eta_s  \|g_s\|\|x_s-x^*\|\cdot r + \eta_s^2 \|g_s\|^2 \\  &\underset{\text{(a)}}{=} \|x_s-x^*\|^2 - 2rq\alpha \|x_s-x^*\|  \cdot  \|x_s-x^*\| r + (rq \alpha  \|x_s-x^*\| )^2 \\&= \left(1-2r^2 q\alpha  +r^2  q^2 \alpha^2\right) \|x_s-x^*\|^2 \\ &\underset{\text{(b)}}{\leq} (  1+(\beta^2-2\beta)r^2 ) \alpha^2 \|x_s-x^*\|^2 \\&\underset{\text{(c)}}{=} (  1+(\beta^2-2\beta)r^2 )^{s+1} \|x_0-x^*\|^2 ,
    \end{align*}
    where (a) and (c) comes from \eqref{ralpha2} and \eqref{alpha2}, respectively. 
For (b), observe that $\beta \leq q\leq 1-\beta$. It follows that 
\begin{subequations}\label{betaq}
    \begin{align}
  & \beta^2 - 2\beta -q^2 \geq \beta^2 -2\beta -(1-\beta)^2  = -1,
  \\&\beta^2 - 2\beta  - q(q-2)\geq \beta^2 -2\beta - \beta(\beta-2) = 0,
\end{align}
\end{subequations}
which leads to
\begin{align*}
   \bigr[ 1+ \left(\beta^2 -2\beta -q^2\right)r^2\bigr]&\alpha^2 +2r^2 q \alpha -1 \\&= (\alpha-1)\left(\left[ 1+ \left(\beta^2 -2\beta -q^2\right)r^2\right]\alpha+1\right) + (\beta^2-2\beta-q^2+2q) r^2 \alpha\\&\geq (\alpha-1) ((1-r^2)\alpha + 1) +0\geq 0,
\end{align*}
where the first inequality follows from $\alpha\geq 1$ and \eqref{betaq}, and the second from $\alpha\geq 1$ and $r=\bar{\mu}\leq 1$.
    Thus, the induction step is complete.

\end{document}